\documentclass[fontsize=11pt,a4paper,DIV=12]{scrartcl}

\usepackage[T1]{fontenc}
\usepackage{libertine}

\usepackage{amsmath, amsthm, amsfonts, amssymb, mathtools, thm-restate, enumerate}

\usepackage{xcolor, mycolor, graphicx, tikz}
\usetikzlibrary{backgrounds, 3d, intersections, calc, arrows.meta}

\usepackage[section]{placeins}
\usepackage{etoolbox}
\pretocmd{\subsection}{\FloatBarrier}{}{}

\usepackage[subrefformat=simple,labelformat=simple]{subcaption}

\usepackage{xspace, xstring, todonotes}

\usepackage[
  backend=biber,
  style=alphabetic,
  sortcites=true,
  sorting=ynt,
  maxbibnames=99,
  maxcitenames=99,
  minalphanames=3,
  giveninits=true,
  backref=true,
  doi=true,
  isbn=false,
  url=false,
  eprint=true
]{biblatex}

\usepackage{hyperref}
\hypersetup{colorlinks=true, citecolor=Blue, linkcolor=Blue, urlcolor=Blue}
\usepackage[noabbrev, capitalise]{cleveref}

\newcommand{\defn}[1]{\textsl{\textcolor{Blue}{#1}}} 

\newtheorem{theorem}{Theorem}

\newtheorem{claim}[theorem]{Claim}

\newtheorem{corollary}[theorem]{Corollary}

\let\L\relax
\DeclareMathOperator{\L}{\mathcal{L}}

\makeatletter
\def\@fnsymbol#1{%
   \ifcase#1
   \or L
   \or X
   \else
   \@ctrerr \fi
}%
\makeatother

\DeclareTOCStyleEntry[
    indent=0em,
    pagenumberformat=\normalfont,
    beforeskip=1pt plus .2pt,
    entryformat=\normalfont
    ]{tocline}{section}

\let\oldnewsavebox\newsavebox
\renewcommand{\newsavebox}[1]{%
  \ifdefined#1%
  \else
    \oldnewsavebox{#1}%
  \fi
}

    \newcommand{\anonymous}{} 
\begin{document}

\title{\huge The face lattice of any simplicial polytope is Hamiltonian}
\if\anonymous Y{
			\author{}
		}\else
	{
		\author{
			Robert Lauff\,\thanks{Technische Universit\"at Berlin, Germany, lauff@math.tu-berlin.de}
		}
	}
\fi

\date{}

\maketitle
\vspace{-30pt}
\begin{abstract}
	\noindent{\textbf{Abstract.}} We show that the face lattice of any simplicial polytope is Hamiltonian. This proves a major part of a recent conjecture by \citeauthor{Listingfaces} which states that the face lattice of any polytope is Hamiltonian (SODA26). We use a line shelling in both directions to obtain two different decompositions of the face lattice into disjoint cubes, which allows the application of a theorem by Gregor to obtain the Hamilton cycle.
\end{abstract}

\thispagestyle{empty}
\setcounter{page}{1}
\pagebreak

\section{Introduction} \label{sec:intro}
Recently, \citeauthor{Listingfaces} \cite{Listingfaces} have conjectured that the cover graph of the face lattice of any polytope is Hamiltonian. They provide evidence for the conjecture by proving it for many special cases. Namely, for hypercubes, permutahedra and B-permutahedra, associahedra, cyclic polytopes, 3-dimensional polytopes, Graph associahedra of chordal graphs, and Quotientopes.

The conjecture is further supported by the following observation. The cover graph of the face lattice of a polytope is a bipartite graph. Hence, it is necessary that the two parts of the bipartition have the same size. This is indeed the case, as a consequence of Euler's formula. Proving the conjecture for all polytopes would hence give a (overly complicated, but beautiful) way to show Euler's formula. Here, we prove the conjecture for all simplicial polytopes. By duality, this also proves the conjecture for all simple polytopes. This generalizes all known results, except the 3-dimensional case. Interestingly, however, the 3-dimensional case is where the main idea comes from. Its proof morally goes as follows: Draw a Schlegel diagram of the polytope, which is a 3-connected plane graph in this case. Delete one of the faces, such that the remaining graph is still 2-connected. By induction, they then find a Hamilton path in the remaining cover graph that ends in a certain triple. Then the deleted face is added back and the path is extended to complete the induction. An anonymous reviewer of the original SODA publication noticed the similarity to a shelling sequence of the polytope, adding the facets one by one, and this idea has been discussed in the scene since. However, generalizing it straight away to higher dimensions proved too difficult. As we show here, the idea does produce a relatively simple proof for simplicial polytopes.

\section{Preliminaries} \label{sec:prelim}
Given a poset $(\mathcal P,<)$, a cover relation of $\mathcal P$ is a pair of elements $x,y\in\mathcal P$ such that $x<y$ and there is no
$z\in\mathcal P$ with $x<z<y$. The \defn{cover graph} of $\mathcal P$ is the graph whose vertices are the elements of $\mathcal P$, and
whose edges are the cover relations of $\mathcal P$. Given two elements $x,y\in\mathcal P$, the \defn{interval} between them is defined as
\[
	[x,y] = \{z\in\mathcal P : x\le z \le y\}.
\]

Given a polytope $P$, its \defn{face lattice} $\L(P)$ orders its faces by
inclusion. Its cover relations consist of pairs of faces whose dimensions differ by 1. If $P$ has dimension $d$, then faces of dimension $d-1$ are called
\defn{facets}, and faces of dimension 0 are called \defn{vertices}. We identify a face by the set of its vertices. A polytope is
\defn{simplicial}, if all its facets are simplices. Note that the faces of a simplex consist of any subset of its vertices. Its face lattice
is hence isomorphic to the \defn{Boolean lattice}, whose cover graph is a cube of the corresponding dimension.

A \defn{shelling} of a polytope $P$ is an ordering of its facets $F_1,\ldots,F_N$ such that for each $i\in\{1,\ldots,N\}$, the intersection
of $F_i$ with the union of the previous facets,
\[
	F_i \cap \bigcup_{j=1}^{i-1} F_j,
\]
is the initial part of a shelling of $F_i$. Because any ordering of the facets of a simplex is a shelling, this simplifies for simplicial
polytopes. A shelling of a
simplicial polytope is an ordering of its facets such that the intersection of a facet with the union of the previous facets is a
union of facets of that simplex.

Bruggesser and Mani\cite{bruggesser1971shellable} have shown that any polytope admits a shelling. In fact, they construct so called $\defn{line shellings}$. Shoot a ray from a point
inside the polytope, so that it is in general position with respect to the facets. Then the order in which the facets become visible when
traversing the ray is a shelling. By picking the right point and direction, they show that for any pair of distinct facets $F$ and $G$,
there is a shelling that starts with $F$ and ends with $G$.

Usually, shellings are defined for \defn{polytopal complexes}, which are collections of polytopes closed under taking faces, and such that the polytopes intersect in proper faces. A \defn{simplicial complex} is a polytopal complex in which all polytopes are simplices. For a simplicial polytope, the set of proper faces (excluding the full polytope) is a simplicial complex, called the \defn{boundary complex} of the polytope. Here, we will only consider \defn{pure} complexes, meaning that all maximal elements in the face-incidence poset are of the same dimension. The definition of shellings for simplicial complexes is the same as the one above for simplicial polytopes, with the additional assumption that the complex is pure. Our results can be extended to any simplicial complex which is homeomorphic to a sphere and which admits a reversible shelling. A shelling is called \defn{reversible} if its reverse is also a shelling, and line shellings have this
property.

Crucially for our purposes, boundary complexes of simplicial polytopes are homeomorphic to spheres. Getting more specific, we will apply the following:

\begin{theorem}[Properties of Line Shellings \cite{bruggesser1971shellable}] \label{thm:line_shelling}
	Let $P$ be a simplicial polytope and let $F$ and $G$ be any two distinct facets of $P$. There exists an ordering of the facets $F_1,\ldots,F_N$ of $P$ satisfying the following properties:
	\begin{enumerate}[(i)]
		\item $F_1 = F$ and $F_N = G$.
		\item The sequence $F_1,\ldots,F_N$ is a shelling of $P$, and its reverse $F_N,\ldots,F_1$ is also a shelling of $P$.
		\item For any $1 < i \le N$, the intersection $F_i \cap \bigcup_{j=1}^{i-1} F_j$ is a non-empty union of facets of $F_i$. In particular:
		      \begin{itemize}
			      \item For $1 < i < N$, it is a \emph{proper} subset of the facets of $F_i$.
			      \item For $i = N$, it is the full set of facets of $F_N$.
		      \end{itemize}
	\end{enumerate}
\end{theorem}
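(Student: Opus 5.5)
The plan is to take the line shelling construction of Bruggesser and Mani and check each property in turn. Since $F\neq G$ are distinct facets, their affine hulls $H_F$ and $H_G$ are distinct hyperplanes. Pick a point $p$ in the interior of $P$ and a point $q$ just beyond $F$, i.e.\ in the region beyond $H_F$ and beneath all other facet hyperplanes. The line $\ell$ through $p$ and $q$ is then perturbed into general position. We also want $\ell$ to pass close to a point $r$ beyond $H_G$ only. Concretely, choose $p$ interior and near the relative interior of $G$, and $q$ just beyond the relative interior of $F$. Then the ray from $p$ through $q$ first crosses $H_F$. After a small perturbation it meets the facet hyperplanes at pairwise distinct points.

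Order the facets as follows. Take first those whose hyperplane is crossed by the ray from $p$ towards $q$ and onward, in the order of crossing. Then pass through infinity and take those crossed coming back from the other side, in that order. This is the standard line shelling, and Bruggesser and Mani show it is a shelling. Property (i) holds because the first hyperplane crossed is $H_F$ by the choice of $q$. For the last facet, note that near $p$ in the backward direction the last hyperplane met before returning to $p$ is $H_G$, since $p$ is very close to $G$. Reversing the line's orientation gives the reverse order, which is again a line shelling. This gives (ii).

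For (iii), recall that in a line shelling the union of the previous facets is exactly the set of facets visible from the current point $x$ on $\ell$. This holds for $x$ chosen just after crossing $H_{F_i}$, and for the return part one uses the complementary "not visible" description. So $F_i\cap\bigcup_{j<i}F_j$ is the union of those facets of $F_i$ shared with earlier facets, and these form a union of facets of the simplex $F_i$. Non-emptiness for $i>1$ follows from the shelling property together with connectivity: since $i>1$, the intersection is a pure $(d-2)$-dimensional union and is nonempty, because the boundary complex is strongly connected and the definition requires the initial part of a shelling. For $i=N$, the union of all other facets is the sphere minus the open facet $F_N$, so it contains all of $\partial F_N$.

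The main obstacle is properness for $1<i<N$. Suppose $F_i\cap\bigcup_{j<i}F_j=\partial F_i$. Then $\bigcup_{j\le i}F_j$ is a pseudomanifold without boundary inside the $(d-1)$-sphere $\partial P$, hence all of it. That forces $i=N$, a contradiction. I would make this rigorous by noting that each ridge lies in exactly two facets: if every ridge of $F_i$ already meets an earlier facet, then $\bigcup_{j\le i}F_j$ is a closed subcomplex with no boundary ridges, and strong connectivity of $\partial P$ then gives that it is everything.
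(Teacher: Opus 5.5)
Your treatment of (i) and (ii) is fine. With $p$ interior near a point of the relative interior of $G$ and $q$ just beyond a point of the relative interior of $F$, the segment $[p,q]$ crosses only $H_F$, and the backward ray leaves $P$ through $G$. Reversing the orientation of $\ell$ then reverses the order.

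The gap is in the properness part of (iii). The step ``every ridge of $F_i$ already lies in an earlier facet, hence $\bigcup_{j\le i}F_j$ has no boundary ridges'' does not follow. The hypothesis controls only the ridges of $F_i$. The earlier facets $F_1,\dots,F_{i-1}$ can still have ridges whose second facet comes after position $i$. Your argument never uses that the order is a shelling, and without that the conclusion is false. On the boundary of the octahedron, list the three neighbours of a triangle $T$ and then $T$. Every ridge of $T$ is covered, but the four triangles form a disk with hexagonal boundary, not the whole sphere. That ordering is not a shelling, and shellability is exactly the information your argument leaves out.

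A correct argument has to use (ii) or the geometry of $\ell$. Two routes work:
\begin{itemize}
\item \emph{Via reversibility.} This is the route behind the paper's remark that every reversibly shellable sphere has these properties. Each ridge of $\partial P$ lies in exactly two facets. So if all ridges of $F_i$ lie in earlier facets, none lies in a later one. In the reversed shelling $F_i$ sits at position $N+1-i\ge 2$. There the intersection $F_i\cap\bigcup_{j>i}F_j$ is a union of facets of $F_i$ containing no ridge, hence empty. This contradicts the non-emptiness required of a shelling.
\item \emph{Via the line.} Bruggesser--Mani's proof shows that $F_i\cap\bigcup_{j<i}F_j$ consists of the facets of $F_i$ that are visible from $y_i=\ell\cap H_{F_i}$ inside $H_{F_i}$ (in the return part: not visible). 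For $1<i<N$ the point $y_i$ lies outside $P$, because $\ell$ meets $\partial P$ in only two points, lying on $H_{F_1}$ and $H_{F_N}$ respectively and on no other facet hyperplane. Hence $y_i$ is beyond at least one facet of $F_i$ and, by boundedness, beneath at least one. This gives a nonempty proper subset in both halves.
\end{itemize}

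Relatedly, non-emptiness for $i>1$ is not a consequence of strong connectivity of $\partial P$: an arbitrary ordering can have a facet meeting nothing earlier. It is part of the standard definition of a shelling, which requires the intersection to be nonempty, or it follows from the same $y_i$ argument.
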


We remark that any simplicial complex that is a topological sphere and admits a reversible shelling satisfies these properties. However, when we omit the assumption for the complex to be a sphere, property (iii) fails.

\section{Results} \label{sec:results}

Our result is the following:
\begin{theorem}\label{thm:main}
	Let $P$ be a simplicial polytope and let $F$ and $G$ be any two distinct facets of $P$. Then the cover graph of $\L(P)$ admits a Hamilton cycle in which the two neighbors of $P$ are $F$ and $G$.
\end{theorem}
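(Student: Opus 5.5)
The plan is to apply \cref{thm:line_shelling} with $F_1=F$ and $F_N=G$, and use the shelling forwards and backwards to get two partitions of $\L(P)$ into Boolean intervals, i.e.\ disjoint cubes in the cover graph. For the forward shelling, let $R_i$ be the set of vertices $v$ of $F_i$ such that $F_i\setminus\{v\}$ lies in $\bigcup_{j<i}F_j$. A face of $F_i$ is new at step $i$ exactly when it contains $R_i$. So the faces first appearing at step $i$ form the interval $[R_i,F_i]$, which is a cube of dimension $|F_i\setminus R_i|$.

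By (iii), $R_1=\emptyset$, $\emptyset\neq R_i\subsetneq F_i$ for $1<i<N$, and $R_N=F_N$. Hence the intervals $[R_i,F_i]$ for $i<N$ are cubes of dimension at least $1$, and the interval for $i=N$ is the single face $\{G\}$. I pair $G$ with $P$ using the cover edge $\{G,P\}$. This gives a partition $\mathcal D_1$ of the whole cover graph of $\L(P)$ into vertex-disjoint subcubes, each of dimension $\ge 1$, containing the edge $\{G,P\}$. Doing the same with the reversed shelling (property (ii)) gives a second partition $\mathcal D_2$ containing the edge $\{F,P\}$, with the empty face now inside the cube of $F_N=G$.

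Next I would invoke Gregor's theorem in the form I expect the paper to use. Roughly: if a connected bipartite graph has a spanning subgraph that is a disjoint union of cubes (each of dimension $\ge1$), and a perfect matching chosen suitably relative to these cubes, then the cubes can be stitched into one Hamilton cycle. The underlying tool is that in $Q_d$, a prescribed perfect matching, or a prescribed pair of edges leaving the cube, extends to a Hamilton cycle or path. Concretely, I would take a perfect matching $M$ lying inside the cubes of $\mathcal D_2$, chosen to contain $\{F,P\}$. The structure $\mathcal D_1 \cup M$ is then used to build the cycle: inside each cube of $\mathcal D_1$ I route a Hamilton path whose endpoints are joined to other cubes via edges of $M$. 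The cycle must use both $\{G,P\}$ (from $\mathcal D_1$) and $\{F,P\}$ (from $M$). Since $P$ has degree $2$ in the cycle, these are then exactly its two neighbours, as \cref{thm:main} demands.

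The main obstacle is checking the hypotheses of Gregor's theorem: that the auxiliary structure formed by the cubes of $\mathcal D_1$ and the edges of $\mathcal D_2$ between them is connected, and is acyclic or can be merged appropriately. First I would try to show that the union $M_1\cup M_2$ of perfect matchings taken from the two decompositions forms one alternating cycle, or can be merged into one. The key point is that the two partitions are "transverse": a cube $[R_i,F_i]$ of the forward shelling meets cubes of the backward shelling whose restriction sets are the complements $F_i\setminus R_i$. This follows because a ridge of $F_i$ is old in one direction exactly when it is new in the other. I expect this complementarity, together with the induction along $i$ (adding one cube at a time and splicing it into the current cycle through an edge of the other decomposition), to give connectivity. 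The endpoint cubes, containing $\emptyset$, $F$ and $G$, need separate care because of the degenerate restrictions $R_1=\emptyset$ and $R_N=F_N$.
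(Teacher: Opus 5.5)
Your architecture matches the paper's. The forward cubes $[R_i,F_i]$ ($i<N$) together with $\{G,P\}$ are the cubes fed to Gregor's theorem. $M$ is a perfect matching inside the reverse cubes $[F_i\setminus R_i,F_i]$ ($i\ge 2$) together with $\{F,P\}$. The two neighbours of $P$ are then forced. However, the one hypothesis of Gregor's theorem that actually needs proof, connectivity of $E\cup M$, is left as your ``main obstacle'', and the routes you suggest for it are off target. Gregor's theorem has no acyclicity hypothesis. It also does not route a Hamilton path through each cube: it returns a perfect matching $M'\subseteq E$ with $M\cup M'$ a single cycle, so the cycle enters and leaves each cube many times. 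Showing that $M_1\cup M_2$ is one alternating cycle would amount to proving the conclusion directly, which is exactly what Gregor's result lets you avoid. The same mechanism is why $\{G,P\}$ must be used: $M'\subseteq E$ is perfect and $\{G,P\}$ is the only cube edge at $P$.

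The missing step is short and uses the complementarity you already noticed. For $i\ge 2$, $F_i$ is the top element of the reverse cube $[F_i\setminus R_i,F_i]$, whose directions are exactly the elements of $R_i$. So whatever perfect matching you take inside that cube, $M$ pairs $F_i$ with $F_i\setminus\{r\}$ for some $r\in R_i$. By the definition of $R_i$, this ridge lies in $\bigcup_{j<i}F_j$, hence in a forward cube of index $k<i$. Meanwhile $F_i$ lies in the forward cube of index $i$; for $i=N$ this is $\{G,P\}$. Thus every forward cube other than $[\emptyset,F_1]$ has an $M$-edge to a cube of smaller index, and induction on $i$ connects everything to $[\emptyset,F_1]$. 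No separate treatment of the endpoint cubes and no ``suitable'' choice of $M$ is needed. The paper fixes some $r_i\in R_i$ and toggles it throughout the reverse cube, but as the argument shows, any perfect matching inside each reverse cube works.
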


\begin{corollary}\label{cor:simple}
	The face lattice of any simple polytope is Hamiltonian.
\end{corollary}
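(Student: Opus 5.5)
The plan is to deduce the corollary from \cref{thm:main} by polar duality. The key point is that the cover graph of a poset does not depend on the direction of the order.

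Let $Q$ be a simple polytope of dimension $d\ge 1$. After an affine change of coordinates, we may assume that $Q$ is full-dimensional in $\mathbb{R}^d$ and contains the origin in its interior. This does not change the face lattice. Now consider the polar polytope
\[
Q^\Delta=\{y : \langle x,y\rangle\le 1 \text{ for all } x\in Q\}.
\]
By the standard duality theorem for polytopes, $\L(Q^\Delta)$ is anti-isomorphic to $\L(Q)$. A face of dimension $k$ of $Q$ corresponds to a face of dimension $d-1-k$ of $Q^\Delta$. The empty face corresponds to the whole polytope and vice versa, and inclusions are reversed. Moreover, $Q$ is simple exactly when every vertex lies in exactly $d$ facets. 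Under duality, this says that every facet of $Q^\Delta$ has exactly $d$ vertices, so every facet of $Q^\Delta$ is a $(d-1)$-simplex. Hence $Q^\Delta$ is simplicial.

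Next, observe that $x$ is covered by $y$ in a poset $\mathcal P$ exactly when $y$ is covered by $x$ in the dual poset. Therefore $\mathcal P$ and its dual have the same cover graph. Combined with the anti-isomorphism above, this shows that the cover graph of $\L(Q)$ is isomorphic to the cover graph of $\L(Q^\Delta)$.

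It remains to apply \cref{thm:main} to $Q^\Delta$. This requires two distinct facets. Any polytope of dimension $d\ge 1$ has at least $d+1\ge 2$ facets, so we may pick any two distinct facets $F,G$. \cref{thm:main} then gives a Hamilton cycle in the cover graph of $\L(Q^\Delta)$, and by the isomorphism it transfers to the cover graph of $\L(Q)$. As a byproduct, the two neighbors of the empty face in this cycle are two prescribed vertices of $Q$.

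No step here is a serious obstacle. The only care needed is with the degenerate case $d=0$, where the cover graph is a single edge, and with stating the duality correctly. In particular, the empty face and the full polytope must be included in the face lattice, so that the anti-isomorphism is a bijection of the full lattices.
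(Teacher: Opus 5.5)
Your proposal is correct and follows the same route as the paper, which deduces the corollary from \Cref{thm:main} in a single phrase, ``by duality''. You supply the details the paper leaves implicit: polar duality is an order-reversing bijection of face lattices sending simple polytopes to simplicial ones, and reversing the order does not change the cover graph. The degenerate case $d=0$ that you flag is likewise implicitly excluded by the paper, since \Cref{thm:main} presupposes two distinct facets.
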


Our goal will be to use line shellings to split the face lattice into a disjoint set of cubes, which are then connected by a perfect matching of the face lattice. This then enables the application of the following generalization of Fink's theorem \cite{fink2007perfect} due to Gregor:

\begin{theorem}[\cite{gregor2009perfect}]
	Let $B_1,\ldots,B_n$ be disjoint cubes of positive dimensions with at least 4 vertices in total, and let $E$ be their combined set of edges. Let $M$ be a perfect matching of the complete graph on the union of their vertex sets. If $E\cup M$ is connected, then there is a perfect matching $M'\subset E$ such that $M\cup M'$ is a Hamilton cycle.
\end{theorem}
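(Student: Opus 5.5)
The plan is to argue by induction on the quantity $\Phi=\sum_{k}(\dim B_k-1)$, using the trivial case $\Phi=0$ as the base and splitting a cube of dimension at least $2$ in the inductive step.

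\emph{Base case.} If $\Phi=0$, every $B_k$ is a single edge, so $E$ is itself a perfect matching on $V=\bigcup_k V(B_k)$. Then $E\cup M$ is $2$-regular (a double edge if some $M$-pair equals an $E$-edge). Since it is connected and $|V|\ge 4$, it is one Hamilton cycle, and we take $M'=E$. This case is the model for the whole argument: once $E$ is cut down to a perfect matching, connectivity of $E\cup M$ is exactly the Hamiltonicity we want.

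\emph{Inductive step, good splitting direction.} Suppose $\Phi>0$ and pick a cube $B=B_1$ with $d=\dim B\ge 2$. For each direction $j\in\{1,\dots,d\}$, split $B$ into two opposite $(d-1)$-subcubes $B^{(j)}_0$ and $B^{(j)}_1$. Let $E_j$ be $E$ with the direction-$j$ edges of $B$ removed. The new family of disjoint cubes has smaller $\Phi$, the same vertex set (so at least $4$ vertices), and the same $M$. If $E_j\cup M$ is connected for some $j$, the induction hypothesis gives $M'\subset E_j\subset E$ with $M\cup M'$ a Hamilton cycle, and we are done.

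\emph{Inductive step, every direction disconnects.} Removing the direction-$j$ edges of $B$ is a cut between the two halves, so $E_j\cup M$ then has exactly two components, $X_0\supseteq B^{(j)}_0$ and $X_1\supseteq B^{(j)}_1$.
\begin{enumerate}[(a)]
\item Fix one such $j$. Apply induction to each component, with the cube family restricted to $X_0$, respectively $X_1$; $M$ restricts to perfect matchings there because $M$ has no edge between $X_0$ and $X_1$.
\item This yields Hamilton cycles $C_0$ on $X_0$ and $C_1$ on $X_1$.
\item Find an edge $xy\in M'\cap E(B^{(j)}_0)$ on $C_0$ whose mirror $x'y'$ (across direction $j$) lies on $C_1$ as an $M'$-edge.
\item Replace $xy$ and $x'y'$ by the direction-$j$ edges $xx'$ and $yy'$. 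The union is still $M$ plus a perfect matching contained in $E$, and it is a single cycle through $X_0\cup X_1$.
\end{enumerate}
Small components with fewer than $4$ vertices occur only when a half is a single edge whose $M$-partner pair lies inside it. These are handled directly: the component is a $2$-cycle, which is merged the same way.

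\emph{Main obstacle.} The hard step is guaranteeing the mirrored pair of $M'$-edges in (c). An arbitrary application of the induction hypothesis gives no control over which cube edges the cycles $C_0$ and $C_1$ use. I would first try to strengthen the statement being proved by induction to: \emph{for every cube $B_k$ of dimension $\ge 2$ and every direction, the cycle can be chosen to use an $M'$-edge of $B_k$ in that direction.} Then in $B^{(j)}_0$ and $B^{(j)}_1$ (dimension $d-1\ge 1$) we would prescribe the same direction $j'\ne j$.

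Even so, the two prescribed edges need not be mirrors of each other. To fix this I would use the flexibility in choosing $j$, together with the symmetry of the situation: in the all-directions-disconnect case, $M$ respects every direction of $B$. I would also try a local "rotation" inside a $2$-face of $B$ to move an $M'$-edge onto its mirror position.

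If that fails, the fallback is to reduce to Fink's single-cube theorem. In the all-disconnected case, $M$ restricted near $B$ is forced to be very structured, and the cube-by-cube argument of \cite{fink2007perfect} can be run directly on $B$, with the components away from $B$ attached via the merging swap above.
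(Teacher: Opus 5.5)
The paper does not prove this statement; it quotes it from Gregor. So your argument has to stand on its own, and it has a hole exactly where you flag one. In the case where every direction disconnects, steps (c)--(d) need a pair of mirrored $M'$-edges on $C_0$ and $C_1$. None of your suggestions (prescribing a direction in a strengthened hypothesis, rotating inside a $2$-face, falling back on Fink) is shown to produce such a pair. Only for $\dim B=2$ is the mirroring automatic, because each half is then a single edge that $M'$ is forced to use.

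The missing idea is that this case cannot occur, by a parity count. Let $H$ be the graph on $V$ whose edges are $M$ together with the edges of all cubes other than $B$.
\begin{itemize}
\item A component $K$ of $H$ is a union of $M$-pairs, so $|K|$ is even.
\item $K\setminus V(B)$ is a union of whole cubes $B_k\neq B$, each with an even number of vertices. Hence $|K\cap V(B)|$ is even.
\item $|K\cap V(B)|$ is also nonzero, since otherwise $K$ would be a component of $E\cup M$ other than $V$.
\end{itemize}
So some component $K$ contains two distinct vertices $u,v$ of $B$, and they differ in some coordinate $j$. Then $E_j\cup M$ is connected: it contains $H$ and both halves $B^{(j)}_0, B^{(j)}_1$, every component of $H$ meets one of the halves, and $K$ meets both. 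You had observed that in the bad case $M$ respects every direction of $B$; the parity count turns that observation into a contradiction. The whole proof then reduces to your base case plus your ``good splitting direction'' step.

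Separately, your induction measure does not decrease. Splitting a $d$-cube replaces its contribution $d-1$ to $\Phi$ by $2(d-2)$. This is the same for $d=3$ and larger for $d\ge 4$, so the induction is not well-founded as stated. Induct instead on $|E|$, which drops by $2^{d-1}$ at each split, or on the number of cubes, which grows and is at most $|V|/2$.
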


\begin{proof}[Proof of \Cref{thm:main}]
	Let $P$ be a simplicial polytope and let $F$ and $G$ be two distinct facets of $P$. Let $F_1,\ldots,F_N$ be a shelling as in the statement of \Cref{thm:line_shelling}. Define
	\[
		B_i = F_i \setminus \bigcup_{j=1}^{i-1} F_j \quad \text{for } i=1,\ldots,N-1, \quad \text{and} \quad B_N = \{F_N, P\}.
	\]
	\begin{claim}
		The restrictions of $\L(P)$ to the sets $B_i$ are isomorphic to cubes of positive dimensions for $i=1,\ldots,N$.
	\end{claim}
	\begin{proof}
		First, the sets $B_i$ are disjoint by definition and partition the faces of $\L(P)$. By \Cref{thm:line_shelling}(iii), the intersection for $i=N$ is the entire boundary of $F_N$. Thus, the only proper face of $F_N$ not contained in the previous facets is $F_N$ itself. Therefore, $B_N = \{F_N, P\}$ is isomorphic to a cube of dimension $1$.

		Hence it remains to show that the sets are isomorphic to cubes of positive dimension for $i < N$. Fix $i\in\{1,\ldots,N-1\}$. Because $F_i$ is a simplex, $\L(F_i)$ is isomorphic to a cube. By \Cref{thm:line_shelling}(ii), $F_i\cap\bigcup_{j=1}^{i-1}F_j$ is a union of facets of $F_i$. A facet $f_v$ of $F_i$ is identified with the unique vertex $v\in F_i$ it does not contain. Let $F_i\cap\bigcup_{j=1}^{i-1}F_j = \{f_{v_1},\ldots,f_{v_m}\}$. Then $B_i$ consists precisely of the faces induced by subsets of $F_i$ which contain $R_i\coloneqq\{v_1,\ldots,v_m\}$. By \Cref{thm:line_shelling}(iii), for $i < N$, the intersection is a proper subset of the facets of $F_i$, meaning $m < |F_i|$. Thus, $B_i$ is an interval in the face lattice of $F_i$ and hence isomorphic to a cube of dimension $|F_i|-m > 0$. In fact, $B_i = [R_i,F_i]$.
	\end{proof}

	It remains to provide a perfect matching $M$ of $\L(P)$ which connects all these cubes together. For this, take the reverse shelling $F_N,\ldots,F_1$. As above, it induces a set of cubes $B'_1,\ldots, B'_N$. Because $P$ is homeomorphic to a sphere, every codimension-1 face (ridge) of $F_i$ is shared with exactly one other facet. Thus, the facets of $F_i$ shared with the subsequent facets in the reverse shelling are exactly those not shared with the preceding ones in the forward shelling. This implies $B'_i=[F_i\setminus R_i,F_i]$. For $i\in\{2,\ldots,N\}$, pick $r_i\in R_i$. When $A$ is a face of $B'_i$, then add the edge
	\[
		A - A\triangle \{r_i\}.
	\]
	Let $M$ be the set of edges defined above, plus the edge $F_1 - P$.
	\begin{claim}
		$M$ is a perfect matching of $\L(P)$ and $B_1\cup\ldots\cup B_N\cup M$ is connected.
	\end{claim}
	\begin{proof}
		Note that $r_i\not\in F_i\setminus R_i$, and hence the edges defined above for faces of $B'_i$ form a perfect matching of $B'_i$. The cubes $B'_2,\ldots,B'_N$ partition the set of faces of $\L(P)$ minus $F_1$ and $P$. Hence, adding the edge $F_1 - P$ forms a perfect matching of $\L(P)$. To see connectivity, note that for $i\in\{2,\ldots,N\}$, $M$ contains the edge $F_i - F_i\setminus\{r_i\}$. By the definition of $r_i$ and $R_i$, the face $F_i\setminus\{r_i\}$ must be contained in $F_j$ for $j<i$. This implies that there is an edge connecting $B_i$ to $B_j$. Every cube is connected to a cube of smaller index, and hence all cubes are connected to $B_1$.
	\end{proof}
	By Gregor's Theorem, there exists a perfect matching $M' \subset \bigcup B_i$ such that $M \cup M'$ is a Hamilton cycle of $\L(P)$. Since $P$ is matched to $F_1$ in $M$, the cycle must contain the edge $F_1 - P$. Furthermore, the only edge of $B_N = \{F_N, P\}$ is $F_N - P$, so the other neighbor of $P$ in the cycle must be $F_N$.
\end{proof}

\section{Conclusion} \label{sec:conclusion}
We have shown that the face lattice of any simplicial or simple polytope is Hamiltonian. This generalizes almost all previously known results in this direction. The obvious next step is to extend this result to all polytopes. However, this seems difficult with this approach. The main mechanism of the proof is partitioning the face lattice into cubes, which relies on the facets to be simplices.

\printbibliography

@inproceedings{Listingfaces,
  author    = {Nastaran Behrooznia and Sofia Brenner and Arturo Merino and Torsten M\"{u}tze and Christian Rieck and Francesco Verciani},
  title     = {Listing faces of polytopes},
  booktitle = {Proceedings of the 2026 Annual ACM-SIAM Symposium on Discrete Algorithms (SODA)},
  pages     = {6212--6222},
  year      = {2026},
  publisher = {SIAM},
  doi       = {10.1137/1.9781611978971.222}
}

@article{bruggesser1971shellable,
  title={Shellable decompositions of cells and spheres},
  author={Bruggesser, Heinz and Mani, Peter},
  journal={Mathematica Scandinavica},
  volume={29},
  number={2},
  pages={197--205},
  year={1971},
  publisher={Mathematical Societies of the Scandinavian Countries}
}

@article{fink2007perfect,
  title={Perfect matchings extend to Hamilton cycles in hypercubes},
  author={Fink, Ji{\v{r}}{\'\i}},
  journal={Journal of Combinatorial Theory, Series B},
  volume={97},
  number={6},
  pages={1074--1076},
  year={2007},
  publisher={Elsevier}
}

@article{gregor2009perfect,
  title={Perfect matchings extending on subcubes to Hamiltonian cycles of hypercubes},
  author={Gregor, Petr},
  journal={Discrete Mathematics},
  volume={309},
  number={6},
  pages={1711--1713},
  year={2009},
  publisher={Elsevier}
}

\end{document}